\newtheorem{thm}{Theorem}[section]
\newtheorem{lemma}[thm]{Lemma}
\newtheorem{coro}{Corollary}[thm]
\newtheorem{defn}[thm]{Definition}
\author{William M. Brummond\footnote{University of Missouri -  St. Louis.  Email: wmbrummond@umsl.edu.}}
\date{June 5, 2019}
\title{Kirkman Systems that Attain the Upper Bound on the Minimum Block Sum, for Access Balancing in Distributed Storage}
\begin{document}

\maketitle

\begin{abstract}
We study a class of combinatorial designs called Kirkman systems, and we show that infinitely many Kirkman systems are well-distributed in a precise sense.  Steiner triple systems of order $n$ can achieve a minimum block sum of $n$.  Kirkman triple systems form parallel classes from the blocks of Steiner triple systems.  We prove that there are an infinite number of Kirkman triple systems that have a minimum block sum of $n$.  We expand this to quadruple systems.  
These concepts can then be applied to distributed storage to spread data across the servers, and servers across locations,  using Kirkman triple systems, while having data well distributed by popularity, measured by the minimum block sum.
\end{abstract}

\section{Introduction}

Due to their unique combinatorial design, Steiner systems have many applications in coding theory.  One application is related to distributed storage and information retrieval \cite{Dau_Milenkovic}.  A very large database is often stored on several servers, with no one server containing all the data.  Different chunks of data have different levels of popularity in being accessed.  The ideal situation is to spread the chunks of data over several servers so that each server is accessed the same number of times.  This will result in more efficient accessing of the data.  

Dau and Milenkovic \cite{Dau_Milenkovic} use Steiner triple systems to achieve this by meeting both of the following requirements:
\begin{itemize}
\item 
	Data is distributed across servers based on popularity.

	Each chunk of data is assigned a value indicating its popularity.  If there are $n$ chunks of data, each chuck is assigned a unique value from $\{0, 1, ..., n-1\}$, where smaller values represent more popular data and larger values represent less popular data.  Chunks of data are distributed across the servers so that the sum of the values on any server (``min-sum") is never less than $n$.
\item 
	The number of instances that the same chunks of data are on the same server is limited.

	Using Steiner triple systems, any two chunks of data are on only one server.
\end{itemize}

For example, suppose a database has 9 chunks of data, with popularity of access of each chunk measured by the integers $\{ 0, 1,..., 8 \}$, with the smallest number representing the most popular and the largest number representing the least popular.  We might want to distribute this data over 12 servers such that:
\begin{itemize}
\item 
	each server has three chunks of data,
\item 
	the sum of the popularity on any server (``min-sum") is never less than 9, and
\item 
	any two chunks of data are on exactly one server.
\end{itemize}  
This can be accomplished, using a Steiner triple system, as follows:
\begin{center}
\begin{tabular}{| c | c | c|}
\hline
Server & Chunks of Data & Sum of Popularity\\
\hline
A & 0, 1, 8 & 9\\
B & 0, 2, 7 & 9\\
C & 0, 3, 6 & 9\\
D & 0, 4, 5 & 9\\
E & 1, 2, 6 & 9\\
F & 1, 3, 5 & 9\\
G & 1, 4, 7 & 12\\
H & 2, 3, 4 & 9\\
I & 2, 5, 8 & 15\\
J & 3, 7, 8 & 18\\
K & 4, 6, 8 & 18\\
L & 5, 6, 7 & 18\\
\hline
\end{tabular}
\end{center}
This paper takes this concept one step further.

The servers could be located in different geographical locations, so as to reduce the risk that all the servers could be destroyed in one catastrophic event.  The different locations could be different buildings, different cities, different countries or any other separate locations.  We will group the servers in different locations, so that if transmissions between locations were to be disrupted, each location would continue to have access to all of the data.  Using the above example, we would want four locations, each containing three servers, which when combined contain all the chunks of data, but continue to have the above restrictions on the popularity of data on any one server.  This grouping can be accomplished, using a Kirkman triple system, by grouping the servers as follows:
\begin{center}
\begin{tabular}{| c | c |}
\hline
Location & Servers \\
\hline
I & A, H, L\\
II & B, F, K \\
III & C, G, I\\
IV & D, E, J\\
\hline
\end{tabular}
\end{center}

This results in each location having access to all the data, while continuing to meet the min-sum requirement and to have no two chunks of data on more than one server.

Note that a Kirkman triple system can be used to address how to group chunks of data by location so that all the data is in one location, but Kirkman systems have not been used to address popularity.  We use Kirkman systems to address distribution by location, while reflecting levels of popularity.  

This paper uses combinatorial design theory to prove both of the following:
\begin{enumerate}
\item 
	There are an infinite number of Kirkman triple systems that have min-sums that reach their upper bounds.
\item 
	There are an infinite number of Kirkman quadruple systems that have min-sums that reach their upper bounds.
\end{enumerate}

These conclusions can then be used to distribute servers by location so that each location has all of the data, while the chunks of data have been distributed across the servers by popularity.

\section{Preliminaries}

This paper uses notation consistent with that of Dau and Milenkovic.

\subsection{Steiner systems}

A Steiner triple system is a pair $(\mathcal{S}, \mathcal{B})$, where $\mathcal{S}$ is a set of $n$ elements and $\mathcal{B}$ is a set of 3-subsets of $\mathcal{S}$, called blocks, with every two elements of $\mathcal{S}$ being contained in exactly one $B \in \mathcal{B}$.  Such a system will be referred to as $STS(n)$.  

In general, a Steiner system $(\mathcal{S}, \mathcal{B})$  will be denoted as $S(t, k, n)$, where $| \mathcal{S} | = n$, each block is of size $k$ and every $t$ elements of $\mathcal{S}$ are contained in exactly one $B \in \mathcal{B}$.
\begin{defn}
	[parallel class]
	A parallel class in $(\mathcal{S}, \mathcal{B})$ is a subset of $\mathcal{B}$ that partitions $\mathcal{S}$.
\end{defn}

\begin{defn}
	[resolvable]
	A Steiner system $(\mathcal{S}, \mathcal{B})$ is resolvable if the blocks of $\mathcal{B}$ can be partitioned into parallel classes.
\end{defn}

\begin{defn}
	[Kirkman triple systems]
	A resolvable Steiner triple system of order $n$ is known as a Kirkman triple system, and is denoted $KTS(n)$. 
\end{defn}

\begin{defn}[min-sum of design $\mathcal{B}$]\cite[page 1647]{Dau_Milenkovic}
	The min-sum of Steiner system $(\mathcal{S},\mathcal{B})$ for $\mathcal{S} = \{0, 1, ..., n-1 \}$ is
	$$min_\Sigma(\mathcal{B}) := \min_{B \in \mathcal{B}}sum(B),$$
	where 
	$$sum(B) = \sum_{x \in B} x. $$
In situations where $\mathcal{B}$ is obvious, we will sometimes write this as $min_\Sigma$ for convenience.
\end{defn}

Dau and Milenkovic \cite{Dau_Milenkovic} addressed the min-sum of Steiner triple systems in 2017, based in part on Bose and Skolem constructions.  To date, there has been no expansion of the min-sum to Kirkman triple systems.  This paper applies the min-sum concept to Kirkman triple systems and Kirkman quadruple systems.

\subsection{Kirkman triple systems}

We first consider the possible values of $n$ for Kirkman triple systems of order $n$.

For $STS(n)$ with design $(\mathcal{S}, \mathcal{B})$, the following are well known \cite[Theorem 1.1.3]{lindner}
\begin{align*}
n &\equiv 1 \text{ or } 3 \pmod{6},\\
| \mathcal{B} | & = \frac{n(n-1)}{6}.
\end{align*}

Each $x \in \mathcal{S}$ must be contained in $r = \frac{n-1}{2}$ blocks of $\mathcal{B}$.  
The size of each parallel class, $\pi$, must be 
$$| \pi | = \frac{n}{3}.$$

Therefore, since $2 \mid (n-1)$ and $3 \mid n$, to be a Kirkman triple system,
$$n \equiv 3 \pmod{6}.$$

Kirkman triple systems deal with blocks of length 3.  The following section examines blocks of length 4.

\subsection{Kirkman quadruple systems}

This subsection examines $S(3,4,n$) of design $(\mathcal{S}, \mathcal{B})$, where:
\begin{itemize}
	\item $| B | = 4$, for all $B \in \mathcal{B}$;
	\item Every three elements of $\mathcal{S}$ is contained in exactly one $B \in \mathcal{B}$; and
	\item $(\mathcal{S}, \mathcal{B})$ is resolvable; i.e,  the blocks, $B$ of $\mathcal{B}$, can be partitioned into parallel classes.
\end{itemize}

For $S(3,4,n)$, the following are well known \cite[page 146]{lindner}
\begin{align*}
n &\equiv 2 \text{ or } 4 \pmod{6},\\
| \mathcal{B} | & = \frac{n(n-1)(n-2)}{24}.
\end{align*}

The size of each parallel class, $\pi$, must be 
$$| \pi | = \frac{n}{4},$$
since each of the $n$ elements must be in exactly one $B \in \pi$.
Therefore, to be a Kirkman quadruple system (``$KQS$"),
$$n \equiv 4 \text{ or } 8 \pmod{12}.$$
The number of parallel classes is 
$$\frac{|\mathcal{B}|}{| \pi |} = \frac{(n-1)(n-2)}{6}.$$

\section{Kirkman systems that reach the upper bound for the min-sum}

\subsection{The upper bound on min-sum}

Dau and Milenkovic \cite[page 1647]{Dau_Milenkovic} proved that the upper bound on the min-sum for any Steiner system $S(t, k, n)$ is
$$min_\Sigma \leq \frac{n(k-t+1) +k(t-2)}{2}.$$
Therefore, the upper bounds on the min-sum for $STS(n)$ and $SQS(n)$ are $n$ and $n+2$, respectively\footnote{For $STS(n)$, $k=3$, $t=2$, 
$$min_\Sigma \leq \frac{n(3-2+1) +3(2-2)}{2}=n.$$
For $SQS(n)$, $k=4$, $t=3$, 
$$min_\Sigma \leq \frac{n(4-3+1) +4(3-2)}{2}=n+2.$$}.

In this section, we prove that there are an infinite number of Kirkman triple systems and Kirkman quadruple systems, constructed from designs of a smaller order, that reach the upper bound for the min-sum.  

\subsection{Kirkman triple systems that reach the upper bound for the min-sum}

The following will show that there are an infinite number of Kirkman triple systems that reach the upper bound on the min-sum.   

\begin{thm}\label{theorem_KTS(3n)}
Let $(\mathcal{S}, \mathcal{B})$ be a $KTS(n)$ with $min_\Sigma(\mathcal{B}) = n$.  Then there exists a $KTS(3n)$, $(\mathcal{S'}, \mathcal{B'})$, with $min_\Sigma(\mathcal{B'}) = 3n$.
\end{thm}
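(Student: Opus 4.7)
The plan is to produce $(\mathcal{S}', \mathcal{B}')$ by a tripling of $(\mathcal{S}, \mathcal{B})$: three ``shifted'' copies of the underlying $KTS(n)$ will handle within-copy pairs, and a single carefully chosen Latin square of order $n$ will handle cross-copy pairs. I would set $\mathcal{S}' = \mathcal{S} \times \{0,1,2\}$ and relabel each point $(x,i)$ by the integer $3x + i$, so $\mathcal{S}'$ is identified with $\{0,1,\ldots,3n-1\}$ via an \emph{interleaved} enumeration of the three copies rather than a concatenated one.

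The blocks of $\mathcal{B}'$ would fall into two families: \emph{level blocks} $\{(a,i),(b,i),(c,i)\}$ for $\{a,b,c\}\in\mathcal{B}$ and $i \in \{0,1,2\}$, and \emph{cross blocks} $\{(a,0),(b,1),(c,2)\}$ where $c \equiv -(a+b) \pmod{n}$. Because $n \equiv 3 \pmod{6}$ is odd, the rule $(a,b) \mapsto -(a+b) \bmod n$ defines a Latin square on $\mathcal{S}$, so the cross blocks cover every pair straddling two different levels exactly once; combined with the level blocks, which cover within-level pairs because $\mathcal{B}$ is an STS, this shows $\mathcal{B}'$ is a Steiner triple system on $\mathcal{S}'$.

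Next I would exhibit the $(3n-1)/2$ parallel classes. For each parallel class $P$ of $\mathcal{B}$, the $n$ blocks obtained by taking every $\{a,b,c\}\in P$ at all three levels partition $\mathcal{S}'$. For each $k \in \mathbb{Z}_n$, the $n$ cross blocks with $a-b \equiv k \pmod{n}$ also partition $\mathcal{S}'$, because $a \mapsto -2a + k$ is a bijection of $\mathbb{Z}_n$ when $n$ is odd. These total $(n-1)/2 + n = (3n-1)/2$ classes, confirming that $\mathcal{B}'$ is resolvable and hence is a $KTS(3n)$.

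The min-sum calculation is then quick: a level block sums to $3(a+b+c) + 3i \geq 3n$ with equality on level $0$ for any tight $\{a,b,c\}\in \mathcal{B}$ having $a+b+c=n$, and a cross block sums to $3(a+b+c)+3$ with $a+b+c \in \{n, 2n\}$, hence at least $3n+3$. The main obstacle, and the point that takes some thought, is the joint choice of labeling and Latin square: the naive concatenated labeling $v(x,i) = x + in$ would leave level-$0$ block sums as low as $n$, and a generic Latin square could easily push $a+b+c$ far below $n$ on some cross block. The interleaved labeling $v(x,i) = 3x+i$ together with the reflective rule $a*b = -(a+b) \bmod n$ is the pairing that multiplies the old tight sum by exactly $3$ while simultaneously forcing every cross block to exceed that bound.
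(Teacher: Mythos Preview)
Your construction is a valid $KTS(3n)$ and the resolvability argument is correct, but the min-sum claim for the cross blocks fails. You assert $a+b+c \in \{n, 2n\}$; however the triple $a=b=c=0$ satisfies $c \equiv -(a+b)\pmod n$, so it is one of your cross blocks, and it has $a+b+c=0$. Under the interleaved labeling its points are $0,1,2$, with sum $3$. Hence $\min_\Sigma(\mathcal{B}')=3$, not $3n$. The fix is small and stays entirely within your framework: replace the reflective rule by the shifted rule $c \equiv -(a+b)-1 \pmod n$, i.e.\ $a+b+c\equiv n-1\pmod n$. This is still a Latin square, so the STS property is preserved; the difference classes $a-b\equiv k$ still give parallel classes because $a\mapsto k-2a-1$ is a bijection of $\mathbb{Z}_n$ for odd $n$; and now $a+b+c\in\{n-1,2n-1\}$, so every cross block sums to $3(a+b+c)+3\geq 3(n-1)+3=3n$, with the level-block analysis unchanged.

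For comparison, the paper takes a quite different route. It uses the \emph{concatenated} labeling $(x,i)\mapsto x+in$ and does not split into level and cross families at all: apart from one diagonal class $\{\,\{t,t+n,t+2n\}:0\le t\le n-1\,\}$, every new block is obtained from an original triple $\{a,b,c\}$ with $a<b<c$ by adding $0$, $n$, or $2n$ to each coordinate so that the three shifts are distinct, and the grouping into parallel classes is read off from the original resolution. The min-sum bound there comes from the fact that each such block picks up at least $2n$ in total shift on top of the original sum $\ge n$. Your Latin-square tripling (once shifted) is more symmetric and makes the resolvability of the cross part completely transparent; the paper's version avoids any arithmetic in $\mathbb{Z}_n$ but leans on the ordering $a<b<c$ and a case-by-case description of the new classes.
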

\begin{proof}
Let $\pi_1, \pi_2,...,\pi_\frac{n-1}{2}$ be the parallel classes of $(\mathcal{S}, \mathcal{B})$, where $\mathcal{S} = \{0, 1, ...,n-1 \}$.  Let $(a_{i,j}, b_{i,j}, c_{i,j})$ be the blocks of $\pi_i$ for $1 \leq i \leq \frac{n-1}{2}$, and $1 \leq j \leq \frac{n}{3}$, arranged so that $a_{i,j} < b_{i,j} < c_{i,j}$, and
$$a_{i,j} + b_{i,j} + c_{i,j} \geq n.$$
Define the parallel classes of $(\mathcal{S'}, \mathcal{B'})$ as follows
\begin{itemize}
	\item $\pi'_0 = \{ (t, t+n, t+2n) \mid 0 \leq t \leq n-1 \}$.
	\item $\pi'_{1,i} = \{ (a_{i,j}, b_{i,j}, c_{i,j}+2n), (a_{i,j}+n, b_{i,j}+n, c_{i,j}), (a_{i,j}+2n, b_{i,j}+2n, c_{i,j}+n) \mid  1 \leq j \leq \frac{n}{3} \}$ for $1 \leq i \leq \frac{n-1}{2}$.
	\item $\pi'_{2,i} = \{ (a_{i,j}, c_{i,j}, b_{i,j}+2n), (a_{i,j}+n, c_{i,j}+n, b_{i,j}), (a_{i,j}+2n, c_{i,j}+2n, b_{i,j}+n) \mid  1 \leq j \leq \frac{n}{3} \}$ for $1 \leq i \leq \frac{n-1}{2}$.
	\item $\pi'_{3,i} = \{ (b_{i,j}, c_{i,j}, a_{i,j}+2n), (b_{i,j}+n, c_{i,j}+n, a_{i,j}), (b_{i,j}+2n, c_{i,j}+2n, a_{i,j}+n) \mid  1 \leq j \leq \frac{n}{3} \}$ for $1 \leq i \leq \frac{n-1}{2}$.
\end{itemize}

Items 1 and 2, below, show that $(\mathcal{S'}, \mathcal{B'})$ is $STS(3n)$.  Item 3 shows that is resolvable.  Item 4 proves that $min_\Sigma(\mathcal{B'})=3n$.
\begin{enumerate}
	\item Prove:  Each block is unique.\\
	\\
		Since each block, $(a_{i,j}, b_{i,j}, c_{i,j})$, is Steiner, any two elements determine the third, and each of three are distinct.
		
		Look at the following three groups of coordinates:
		\begin{itemize}
		\item The coordinates $a_{i,j}$, $b_{i,j}$, and $c_{i,j}$ are in the range [$0, n-1$].  
		\item When we add $n$ to a coordinate, it is in the range [$n, 2n-1$].
		\item When we add $2n$ to a coordinate, it is in the range [$2n, 3n-1$].  
		\end{itemize}
		
		So for  $\pi'_{1,i}$, $\pi'_{2,i}$ and $\pi'_{3,i}$, any two blocks will have the same coordinates only if the same $a_{i,j}$, $b_{i,j}$, and $c_{i,j}$ are included in both, with the same additions of 0, $n$, or $2n$.  But, it is obvious that this never occurs.  Therefore, each block is unique. 
		
	\item Prove:  The total number of blocks in $\mathcal{B'}$ is $\frac{3n(3n-1)}{6}$.
		\begin{align*}
		\text{Total number of blocks } &=
		|\pi'_0|+ \sum_{k=1}^3 \sum_{i=1}^{\frac{n-1}{2}} |\pi'_{k,i}|\\
		&= n + 3 \cdot \frac{n-1}{2} \cdot n \\
		&= \frac{3n(3n-1)}{6}.
		\end{align*}
		
	\item Prove:  Each parallel class includes all the elements of $\mathcal{S'} = \{0, 1, ..., 3n-1 \}$.
	
		For $\pi'_0$, it is obvious that all the elements of $\mathcal{S'}$ are included. 

		Each parallel class in $\pi'_{1,i}$, $\pi'_{2,i}$ and $\pi'_{3,i}$ is created by taking a parallel class from $KTS(n)$ and adding to each coordinate of that parallel class 0, $n$ and $2n$. Since each original parallel class from $KTS(n)$ contains the elements of $\mathcal{S}$, then each new parallel class includes all the elements of $\mathcal{S'} = \{0, 1, ..., 3n-1 \}$.
	\item Prove:  $min_\Sigma(\mathcal{B'}) = 3n$.
	
		We need to show that sum of the coordinates of each block is greater than or equal to $3n$.
		
		For $\pi'_0$, 
		$$t + t + n + t + 2n = 3t + 3n \geq 3n.$$
		
		The parallel classes $\pi'_{1,i}$, $\pi'_{2,i}$ and $\pi'_{3,i}$ are created by adding at least $2n$ to each block of $\mathcal{B}$.  Since each block of $\mathcal{B}$ has a sum of at least $n$, the sum of each block from $\pi'_1$, $\pi'_2$ and $\pi'_3$ is at least $3n$.
\end{enumerate}
Therefore, $(\mathcal{S'}, \mathcal{B'})$ is a $KTS(3n)$ with $min_\Sigma(\mathcal{B'}) = 3n$.
\end{proof}
\begin{coro}\label{induction_kts}
For every $k \geq 1$, there exists a $KTS(n= 3^k)$ with the maximum min-sum.
\end{coro}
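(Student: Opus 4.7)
The plan is to prove this by induction on $k$, with Theorem \ref{theorem_KTS(3n)} providing the inductive engine.

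For the base case $k=1$, I would exhibit a $KTS(3)$ attaining the upper bound. The only $KTS(3)$ on $\mathcal{S} = \{0,1,2\}$ consists of the single block $\{0,1,2\}$, which itself is a parallel class. Its block sum is $0+1+2 = 3$, which equals $n = 3^1$, so the min-sum upper bound is achieved.

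For the inductive step, I would assume that there exists a $KTS(3^k)$ with $\min_\Sigma = 3^k$. Setting $n = 3^k$, Theorem \ref{theorem_KTS(3n)} applies directly (its hypothesis that $(\mathcal{S},\mathcal{B})$ is a $KTS(n)$ with min-sum equal to $n$ is exactly the inductive hypothesis), producing a $KTS(3n) = KTS(3^{k+1})$ whose min-sum equals $3n = 3^{k+1}$. This gives the desired design at the next level.

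There is no real obstacle here: the corollary is essentially a packaging of Theorem \ref{theorem_KTS(3n)} as an iterative construction, and the only thing to check carefully is that the base case is a genuine Kirkman triple system (which it is, trivially, as a single parallel class). The induction then produces the infinite family $KTS(3), KTS(9), KTS(27), \ldots$, each attaining the upper bound on the min-sum.
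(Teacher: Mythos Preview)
Your proposal is correct and follows essentially the same approach as the paper: a straightforward induction on $k$ with the trivial $KTS(3)$ as the base case and Theorem~\ref{theorem_KTS(3n)} supplying the inductive step. There is nothing to add or correct.
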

\begin{proof}
This is a proof by mathematical induction.\\

\underline{Base case}.  For $k=1$, we have $n= 3$.  Then
\begin{align*}
\mathcal{S}_1 &= \{0, 1, 2\}\\
\mathcal{B}_1 &= \{ (0, 1, 2) \}\\
\pi_1 &= \{ (0, 1, 2) \}\\
min_\Sigma(\mathcal{B}_1) &= 3 = n.
\end{align*}
Therefore, there exists $KTS(n)$ with $min_\Sigma(\mathcal{B}_1)=n$ for $k=1$.

\underline{Inductive Case}.  Assume that there exists $k \geq 1$, such that for $n=3^k$, there exists $(\mathcal{S}_k, \mathcal{B}_k)$, a $KTS(n)$ where $\mathcal{S}_k = \{0, 1, ..., n-1 \}$ with $min_\Sigma(\mathcal{B}_k) = n$.  Then, based on Theorem \ref{theorem_KTS(3n)},  there exists $(\mathcal{S}_{k+1}, \mathcal{B}_{k+1})$, a $KTS(3n)$ where $\mathcal{S}_{k+1} = \{0, 1, ..., 3n-1 \}$ with $min_\Sigma(\mathcal{B}_{k+1}) = 3n$. 

Therefore, there exists $KTS(n = 3^k)$ for all $k \geq 1$, with the maximum min-sum.
\end{proof}

\subsection{Kirkman quadruple systems that reach the upper bound for the min-sum}

The following looks at $KQS(n)$ from a graphical standpoint.  

\begin{defn}
[KQS(n) graph]  A $KQS(n)$ graph is a  regular graph (i.e., each vertex has the same degree), with $n$ vertices and $\frac{n(n-1)(n-2)}{6}$ edges, which contains $\frac{n(n-1)(n-2)}{24}$ 4-cycles, which can be grouped into $\frac{(n-1)(n-2)}{6}$ parallel classes.  Each 3-cycle is included in exactly one of the 4-cycles.
\end{defn}

Let $G$ be a $KQS(n)$ graph with vertex labels $S = \{ 0, 1, ..., n-1\}$.  Each of the 4-cycles are the blocks of the $KQS$.  We know that each pair of vertices $(a,b)$, where $a,b \in S$, appears in $\frac{n-2}{2}$ 4-cycles.  Figure \ref{fig:4-cycles} illustrates two graphs each showing two blocks of $KTS(8)$, with the two blocks representing a parallel class.  The blocks on the left, $(0, 2, 5, 7)$ and $(1, 3, 4, 6)$, and on the right, $(0, 5, 2, 7)$ and $(1, 3, 6, 4)$, are equivalent for our purposes since we are not concerned with the order of the coordinates of the quadruple.
\begin{figure}[h]
	\includegraphics[scale=0.35]{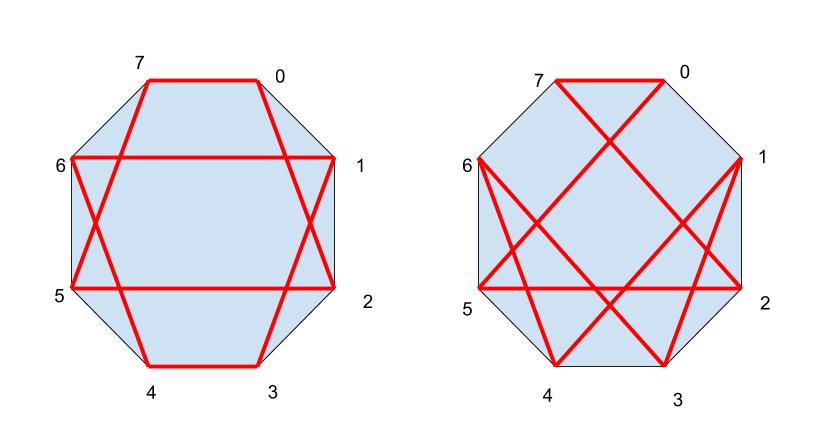}
	\centering
	\caption{4-cycles that are equivalent for our purposes.}
	\label{fig:4-cycles}
\end{figure}\\
For $SQS(n)$,
\begin{align*}
\text{number of blocks} &= \frac{n(n-1)(n-2)}{24},\text{ and}\\
\text{total number of edges} &= 4\frac{n(n-1)(n-2)}{24} = \frac{n(n-1)(n-2)}{6},
\end{align*}
since each block has 4 edges.  The total number of unique edges is $\frac{n(n-1)}{2}$.

For $SQS(2n)$,
\begin{align*}
\text{number of blocks} &= \frac{2n(2n-1)(2n-2)}{24} = \frac{n(2n-1)(2n-2)}{12},\text{ and}\\
\text{total number of edges} &= 4\frac{2n(2n-1)(2n-2)}{24} = \frac{n(2n-1)(2n-2)}{3}.
\end{align*}

The following shows that a $KQS(2n)$ with $min_\Sigma = 2n+2$ can be constructed from two $KQS(n)$ with $min_\Sigma = n+2$.  First, we need to show that for 
\begin{itemize}
	\item $S = \{ 0, 1, ..., n-1\}$, where $4 \mid n$, and
	\item $T = \{ (a,b) \mid a < b ; \; a, b \in S \}$,
\end{itemize}  
$T$ can be partitioned into $\frac{n}{2}$-subsets:  $p_1, p_2,..., p_{n-1}$, such that the pairs of each $p_i$ partition $S$ for for each $i \in \{1, 2, ..., n-1 \}$.\\
\\
First, Lemma \ref{partition_2} will prove this where $2 \mid n$, $4 \nmid n$.  Then, Corollary \ref{partition_2_4} proves this for $2 \mid n$.
\begin{lemma}\label{partition_2}
\emph{(Partition of $\{ (a,b) \mid a < b ; \; a, b \in S \}$)} \\ 
Let 
\begin{itemize}
	\item $S = \{ 0, 1, ..., n-1\}$, where $2 \mid n$, $4 \nmid n$,
	\item $T = \{ (a,b) \mid a < b ; \; a, b \in S \}$.
\end{itemize}  
$T$ can be partitioned into $\frac{n}{2}$-subsets:  $p_1, p_2,..., p_{n-1}$, called parallel classes, such that the pairs of each $p_i$ partition $S$ for each  $i \in \{1, 2, ..., n-1 \}$.
\end{lemma}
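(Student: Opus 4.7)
The plan is to recognize this claim as the existence of a 1-factorization of the complete graph $K_n$, a classical fact for every even $n$, and to produce the $p_i$'s via the standard round-robin construction.

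First, I would identify the element $n-1\in S$ with a symbol $\infty$ and identify $\{0,1,\ldots,n-2\}$ with $\mathbb{Z}_{n-1}$, so that $T$ becomes the edge set of $K_n$. For each $i\in\{1,\ldots,n-1\}$, define
$$p_i \;=\; \bigl\{\{\infty,\,i-1\}\bigr\} \;\cup\; \bigl\{\{(i-1)+j,\,(i-1)-j\}:1\leq j\leq (n-2)/2\bigr\},$$
with non-$\infty$ arithmetic taken modulo $n-1$.

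Second, I would verify that each $p_i$ is a partition of $S$. Counting gives $|p_i|=1+(n-2)/2 = n/2$. The first pair uses $\infty$ and $i-1$; the remaining $(n-2)/2$ pairs should cover $\mathbb{Z}_{n-1}\setminus\{i-1\}$ without repetition. Since $2 \mid n$, the modulus $n-1$ is odd, so $(i-1)+j\equiv(i-1)-j\pmod{n-1}$ would force $2j\equiv 0$, hence $j\equiv 0\pmod{n-1}$, which is excluded in the range $1\leq j\leq (n-2)/2$. Thus every pair is non-degenerate and the $n-2$ elements listed are exactly the nonzero shifts of $i-1$ modulo $n-1$.

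Third, I would show that every pair in $T$ lies in exactly one $p_i$. A pair of the form $\{n-1,a\}$ (containing $\infty$) lies uniquely in $p_{a+1}$. For $\{a,b\}\subseteq\mathbb{Z}_{n-1}$, membership in $p_i$ is equivalent to the existence of $j$ with $(i-1)+j\equiv a$ and $(i-1)-j\equiv b$ (or vice versa), i.e., $2(i-1)\equiv a+b\pmod{n-1}$; since $n-1$ is odd, $2$ is invertible modulo $n-1$, so $i$ is uniquely determined.

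The only real obstacle is the parity check in the second step, and this uses exactly the hypothesis $2\mid n$; the additional assumption $4\nmid n$ in the statement is not actually required for this construction, so the lemma as stated is a special case of the general 1-factorization result and sets up the extension in Corollary \ref{partition_2_4}. With the construction above in hand, the remainder is purely modular bookkeeping.
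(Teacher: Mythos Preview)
Your argument is correct: it is the classical round-robin 1-factorization of $K_n$, fixing one vertex at $\infty$ and rotating a base matching through $\mathbb{Z}_{n-1}$. The paper takes a different route, working modulo $n$ rather than $n-1$: it builds $n/2$ classes $p_{Ei}$ from a diameter $(i,i+n/2)$ together with the symmetric pairs $(i+t,i-t)\pmod n$, and then supplies the remaining odd-difference edges via separate classes $p_{Oi}$, $p_{Oi+1}$. That split genuinely requires $4\nmid n$, since the $p_{Ei}$ account only for even differences and the single odd difference $n/2$, and the leftover odd-difference edges assemble into whole 1-factors only when $n/2$ is odd. The paper then reaches arbitrary even $n$ via a doubling step (Corollary~\ref{partition_4}) and an induction (Corollary~\ref{partition_2_4}). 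Your construction is both simpler and strictly more general---it already yields Corollary~\ref{partition_2_4} directly---whereas the paper's approach, though more labored, keeps all arithmetic inside $\mathbb{Z}_n$ and avoids the asymmetry of a distinguished $\infty$ vertex.
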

\begin{proof}
For the trivial case of $n=2$, 
\begin{align*}
S & = \{0, 1\},\\
T &= \{ (0,1) \},\\
p_1 &= \{ (0,1) \}.
\end{align*}
The following addresses $n > 2$.  Let 
\begin{align*}
p_{Ei} &= \bigg\{ \left( i, i+\frac{n}{2} \right), (i+t,i-t) \pmod{n} \mid t =1, 2, ..., \frac{n}{2}-1 \bigg\} \\
&\qquad \text{ for } i \in \bigg\{0, 1, ...,  \frac{n}{2}-1 \bigg\},\\
p_{Oi} &= \bigg\{ (2t,2t + i) \pmod{n} \mid t =1, 2, ..., \frac{n}{2} \bigg\} \text{ for } i \in \bigg\{ 1, 3, ..., \frac{n}{2} - 2 \bigg\}.\\
p_{Oi+1} &= \bigg\{ (2t+1,2t+1 + i) \pmod{n} \mid t =1, 2, ..., \frac{n}{2} \bigg\} \text{ for } i \in \bigg\{ 1, 3, ..., \frac{n}{2} - 2 \bigg\}.
\end{align*}
Graphically, for $n=10$, $p_{Ei}$ looks like the pairs of vertices connected by the edges on the graphs shown in Figure \ref{fig:pEi},\\
\begin{figure}[h]
	\includegraphics[scale=0.5]{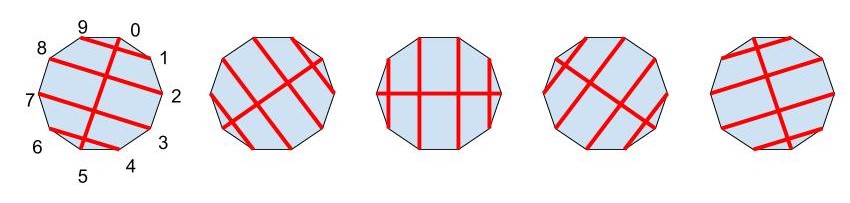}
	\centering
	\caption{$p_{Ei}$ for $n=10$.}
	\label{fig:pEi}
\end{figure}
\\$p_{Oi}$ and $p_{Oi+1}$, for $i=1$, look like the pairs of vertices connected by the edges on the graphs of Figure \ref{fig:pO1},
\begin{figure}[h]
	\includegraphics[scale=0.4]{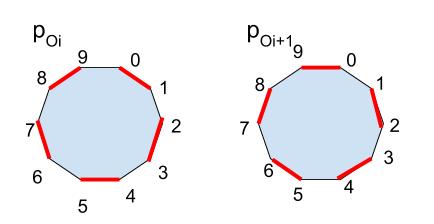}
	\centering
	\caption{$p_{Oi}$ and $p_{Oi+1}$, for $i=1$.}
	\label{fig:pO1}
\end{figure}\\
and $p_{Oi}$ and $p_{Oi+1}$, for $i=3$, look like the pairs of vertices shown in Figure \ref{fig:pO3}.\\
\begin{figure}[h]
	\includegraphics[scale=0.4]{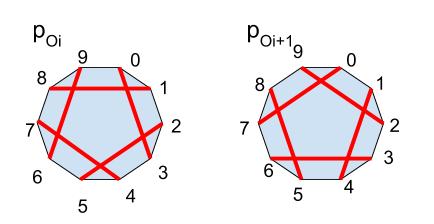}.
	\centering
	\caption{$p_{Oi}$ and $p_{Oi+1}$, for $i=3$.}
	\label{fig:pO3}
\end{figure}\\
Then, let
\begin{align*}
P_E &= \bigg\{ p_{Ei} \mid  i =1, 2,...,  \frac{n}{2}-1  \bigg\},\\
P_O &= \bigg\{ p_{Oi}, \, p_{Oi+1} \mid  i =1, 3, ..., \frac{n}{2} - 2 \bigg\},
\end{align*}
Since $2 \mid n$ and $4 \nmid n$, then
\begin{itemize}
\item for each pair $(x, y) \in p_{Ei}$, the difference between the two elements in the pair is either even or  $\frac{n}{2}$, and
\item for each pair $(2t, 2t+i) \in p_{Oi}$, and each pair $(2t+1, 2t+1+i) \in p_{Oi+1}$, the difference between the two elements in the pair is always i, an odd number less than $\frac{n}{2}$.
\end{itemize}     
Therefore, there are no pairs in a $p_i \in P_E$ that are in any $p_j \in P_O$, and vice versa.

Next, we will show that any pair appears exactly once in any $p_{Ei} \in P_E$.  The order of the elements in pair do not matter.
\begin{itemize}
\item Suppose there exists $(i,i+\frac{n}{2}) = (j,j+\frac{n}{2})$ where $i \neq j$.  Then, $i \equiv  j+\frac{n}{2} \pmod{n}$.  However, there are no $p_i, p_j \in P_E$ where $j-i \equiv \frac{n}{2} \pmod{n}$, because $\mid j-i \mid < \frac{n}{2}$.  Therefore, this is a contradiction.
\item Suppose there exists $(i+t,i-t) = (j+s,j-s)$ where $i \neq j$.  Then, there are two possible cases:
	\begin{itemize}
	\item Case 1. $i+t \equiv j+s \pmod{n}$, and $i-t \equiv j-s \pmod{n}$.\\
	Then, from the first equation we have $i-j+t \equiv s$.  Substituting this into the second equation we get
	\begin{align*}
	i-t &\equiv j -i+j-t \pmod{n}\\
	2i &\equiv 2j,
	\end{align*}
	which is not possible because  $ i \neq j \in \{0, 1, ..., \frac{n}{2}-1 \}$.
	\item Case 2. $i+t \equiv j-s \pmod{n}$, and $i-t \equiv j+s \pmod{n}$.\\
	Then, from the first equation we have $i-j+t \equiv -s$.  Substituting this into the second equation we get
	\begin{align*}
	i-t &\equiv j -i+j-t\\
	2i &\equiv 2j,
	\end{align*}
	which is not possible because $ i \neq j \in \{0, 1, ..., \frac{n}{2}-1 \}$.
	\end{itemize}
\item Suppose there exists $\left( i, i+\frac{n}{2} \right) = (j+t,j-t)$ where $i \neq j$.  Then, there are two possible cases:
	\begin{itemize}
	\item Case 1. $i \equiv j+t \pmod{n}$, and $i+\frac{n}{2} \equiv j-t \pmod{n}$.\\
		Substituting the first equation into the second equation we get
		\begin{align*}
		j+t+\frac{n}{2} &\equiv j-t \pmod{n}\\
		2t +\frac{n}{2}&\equiv 0\\
		t &\equiv \frac{n}{4} \text{  or  } \frac{3n}{4},
		\end{align*}
		which is not possible because $ t \in \{1, 2, ..., \frac{n}{2}-1 \}$ and $4 \nmid n$.
	\item Case 2. $i \equiv j-t \pmod{n}$, and $i+\frac{n}{2} \equiv j+t \pmod{n}$.\\
		Substituting the first equation into the second equation we get
		\begin{align*}
		j-t+\frac{n}{2} &\equiv j+t \pmod{n}\\
		\frac{n}{2}&\equiv 2t\\
		t &\equiv \frac{n}{4} \text{  or  } \frac{3n}{4},
		\end{align*}
		which is not possible because $ t \in \{1, 2, ..., \frac{n}{2}-1 \}$ and $4 \nmid n$.
	\end{itemize}
\end{itemize}

Therefore, any pair appears exactly once in any $p_{Ei}$.

Next, we will show that any pair appears exactly once in any $p_{Oi}$.  Suppose, $(2t_1, 2t_1 + i) = (2t_2, 2t_2 +i)$, where $t_1 \neq t_2$.  Since, $t_1, t_2 \in \{ 1, 2, ..., \frac{n}{2} \}$, the only possibility is that $2t_1 \equiv 2t_2+i$ and $2t_1 +i \equiv 2t_2$.  Substituting the first equation into the second
\begin{align*}
2t_2+i +i &\equiv 2t_2 \pmod{n}\\
2i &\equiv 0.
\end{align*} 
But this is not possible because $i \in \{ 1, 3, ..., \frac{n}{2} - 2 \}$.

Similarly, any pair appears exactly once in any $p_{Oi+1}$.

Next, we will show that any pair in $p_{Oi}$ does not appear in $p_{Oi+1}$.  Suppose $(2t_1, 2t_1 +i) = (2t_2 +1 , 2t_2 +1 +j)$.  The only possibility is for $ 2t_1 \equiv 2t_2 +1 +j$ and $2t_1 +i \equiv 2t_2 +1$.  Substituting the first equation into the second equation
\begin{align*}
2t_2 +1 +j +i &\equiv 2t_2 +1 \pmod{n}\\
j+i &\equiv  0.
\end{align*}
But, this is not possible because $i,j \in \{ 1, 3,..., \frac{n}{2}-2 \}$.

Therefore, all the pairs of $P_E \cup P_O$ are distinct.

Also, it is obvious for each $p_i \in P_E \cup P_O$,
$$S = \{x_1, x_2 \mid (x_1, x_2) \in p_i\},$$ 
since each $x \in S$ occurs only once in each $p_i$ and each $p_i$ has $\frac{n}{2}$ pairs.\\
\\
Since 
$$\mid P_E \cup P_O \mid \, = \frac{n}{2} + \frac{n}{2} -1 =n-1,$$
we have identified all of the parallel classes.

Since each pair included in the parallel classes of $P_E \cup P_O$ is unique, and the number of all pairs is $(n-1)\frac{n}{2}$, then
$$ \{ (a, b) \mid  (a,b) \in p,\text{ for } p \in P_E \cup P_O\} =\{ (a,b) \mid a < b ; \; a, b \in S \} =T.$$

Therefore, $T$ can be partitioned into the $n-1$ subsets of $P_E \cup P_O$, each of size $\frac{n}{2}$, such that 
the pairs of each $p_i \in P_E \cup P_O$ partition $S$.
\end{proof} 

An example of a partition under Lemma \ref{partition_2} is shown in Appendix \ref{App_example_2}.

\begin{coro}\label{partition_4} Let 
\begin{itemize}
	\item $S = \{ 0, 1, ..., n-1\}$, where $2 \mid n$,
	\item $T = \{ (a,b) \mid a < b ; \; a, b \in S \}$,
\end{itemize}  
and $p_1, p_2,..., p_{n-1}$ be partitions of $T$ such that the pairs of each $p_i$ partition $S$ for each $i \in \{1, 2, ..., n-1 \}$.  
Then
	$$T' = \{ (a,b) \mid a < b ; \; a, b \in S' = \{ 0, 1, ..., 2n-1\} \}$$
can be partitioned into $n$-subsets:  $p'_1, p'_2,..., p'_{2n-1}$, such that the pairs of each $p'_i$ partition $S'$ for each $i \in \{1, 2, ..., 2n-1 \}$.
\end{coro}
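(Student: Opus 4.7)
The plan is to use a standard doubling construction. Write $S' = S \cup (S+n)$ where $S+n = \{n, n+1, \dots, 2n-1\}$, and split the pair-set $T'$ into three pieces: intra-$S$ pairs, intra-$(S+n)$ pairs, and cross pairs of the form $(a, b+n)$ with $a,b \in S$. The first two pieces will be handled by recycling the given partition $p_1,\dots,p_{n-1}$, while the cross pairs will be handled by a new family of $n$ classes indexed by the ``shift'' $b - a \bmod n$.

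Concretely, for each $i \in \{1, 2, \dots, n-1\}$ I would define
\[
p'_i \;=\; p_i \,\cup\, \{(a+n, b+n) \mid (a,b) \in p_i\},
\]
which has $n/2 + n/2 = n$ pairs and, since $p_i$ partitions $S$, partitions $S' = S \cup (S+n)$. Distinctness among these $n-1$ classes follows immediately from distinctness of the $p_i$ together with the fact that each $p'_i$ only contains pairs whose two elements lie in the same ``half'' ($S$ or $S+n$). For $i \in \{0, 1, \dots, n-1\}$, I would then define
\[
p'_{n+i} \;=\; \{\,(j,\; ((j+i)\bmod n) + n) \mid j \in S\,\},
\]
a set of $n$ cross pairs. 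Its first coordinates exhaust $S$ and its second coordinates exhaust $S+n$, so $p'_{n+i}$ partitions $S'$.

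To verify that the whole family $\{p'_1,\dots,p'_{2n-1}\}$ partitions $T'$, I would argue by pair type. Any intra-$S$ pair lies in exactly one $p_i$, hence in exactly one $p'_i$ with $1 \leq i \leq n-1$; likewise for intra-$(S+n)$ pairs. Any cross pair $(a, b+n)$ lies in exactly one $p'_{n+i}$, namely for $i \equiv b - a \pmod{n}$. Since these three categories are disjoint by construction, no pair appears in two different classes. A quick count gives $(n-1) + n = 2n-1$ classes of size $n$, accounting for $(2n-1)\cdot n = \binom{2n}{2}$ pairs in total, so nothing is missed.

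There is no real obstacle here: the doubling trick is clean, and because we only need \emph{pair} partitions (not anything as rigid as a Steiner triple system), there are no congruence conditions to worry about. The only thing to be slightly careful about is the indexing of the cross classes and the verification that each cross pair lands in exactly one $p'_{n+i}$, which reduces to the observation that $i \mapsto b - a \bmod n$ is well-defined and uniquely determined by the pair.
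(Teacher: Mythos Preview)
Your construction is correct and is essentially identical to the paper's: the paper likewise sets $p'_i = p_i \cup \{(a+n,b+n)\mid (a,b)\in p_i\}$ for $1\le i\le n-1$ and $p'_{n+t} = \{(j,\,n+(j+t\bmod n))\mid j\in S\}$ for $t\in\{0,\dots,n-1\}$, with the same verification by counting classes. If anything, your pair-type case analysis showing that every element of $T'$ lands in exactly one class is slightly more explicit than the paper's argument.
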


\begin{proof}
Let $G_1$ be a complete graph with $n$ vertices labeled $0, 1, ..., n-1$.  The pairs of each $p_i$, for $1 \leq i \leq n-1$, correspond to the edges of $G_1$.  Let $G_2$ be a complete graph, with vertices labeled $n, n+1, ..., 2n-1$, isomorphic to $G_1$ such that each vertex $v$ of $G_1$ corresponds to vertex $v+n$ of $G_2$.  Let $p^*_i$, for $1 \leq i \leq n-1$, be the parallel classes for $G_2$ such that
$$p^*_i = \{ (a+n, b+n) \mid (a,b) \in p_i \} \text{ for } 1 \leq i \leq n-1.$$
\noindent Define the parallel classes for $S'$ as follows, which we group into $P_1$ and $P_2$:
\begin{itemize}
\item $P_1$:  $p'_i = p_i \cup p^*_i$ for $1 \leq i \leq n-1$.\\
	Since each $p'_i$ is a union of a parallel class from $G_1$ and a parallel class from $G_2$, it is obvious that the pairs of each $p'_{i}$ partition $S'$.
\item $P_2$:  $p'_{n+t} = \{ (j, n + (j+t \pmod{n}) ) \mid j = 0, 1,..., n-1 \}$ for $ t \in \{0, 1, ..., n-1 \}.$\\
	For these pairs, the first coordinate comes from $G_1$ and the second coordinate from $G_2$.  Each pair in $p'_{n+t}$ has a second coordinate that is $t$ larger than the $G_2$ vertex that corresponds to the first coordinate.  Therefore, each pair is unique, and the pairs of each $p'_{n+t}$ partition $S'$.
\end{itemize}
Then, 
$$| P_1 | + | P_2 | = n-1 + n = 2n-1.$$
Therefore, we have identified $2n-1$ parallel classes, each of size $n$.  Therefore, $T'$ can be partitioned into $n$-subsets:  $p'_1, p'_2,..., p'_{2n-1}$, such that the pairs of each $p'_i$ partition $S'$ for each $i \in \{1, 2, ..., m-1 \}$.
\end{proof}

An example of a partition under Corollary \ref{partition_4} is shown in Appendix \ref{App_example_4}.

\begin{coro}\label{partition_2_4} Let 
\begin{itemize}
	\item $S'' = \{ 0, 1, ..., m-1\}$, where $2 \mid m$,
	\item $T'' = \{ (a,b) \mid a < b ; \; a, b \in S'' \}$.
\end{itemize}  
Then, $T''$ can be partitioned into $\frac{m}{2}$-subsets:  $p'_1, p'_2,..., p'_{m-1}$, such that the pairs of each $p'_i$ partition $S''$ for each $i \in \{1, 2, ..., m-1 \}$.
\end{coro}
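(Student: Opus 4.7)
The plan is to prove Corollary \ref{partition_2_4} by induction on $m$ (equivalently, on the 2-adic valuation of $m$), using Lemma \ref{partition_2} as the base case and Corollary \ref{partition_4} as the inductive step. Write $m = 2^k q$ with $q$ odd; since $2 \mid m$ we have $k \geq 1$, and the induction will be on $k$.

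For the base case $k = 1$, we have $m \equiv 2 \pmod 4$, so $2 \mid m$ and $4 \nmid m$. Lemma \ref{partition_2} applies to $S''$ directly and produces the desired partition of $T''$ into $\frac{m}{2}$-subsets $p'_1, \ldots, p'_{m-1}$ whose pairs partition $S''$.

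For the inductive step $k \geq 2$, set $n = m/2 = 2^{k-1} q$. Since $k - 1 \geq 1$, $n$ is even and strictly smaller than $m$, so by the inductive hypothesis applied to $n$, the set $\{(a,b) \mid a < b; \, a,b \in \{0, 1, \ldots, n-1\}\}$ can be partitioned into $\frac{n}{2}$-subsets $p_1, \ldots, p_{n-1}$ such that the pairs of each $p_i$ partition $\{0, 1, \ldots, n-1\}$. This is exactly the hypothesis of Corollary \ref{partition_4}, which then produces a partition of $T''$ (with $S'' = \{0,\ldots,2n-1\}$) into $n$-subsets $p'_1, \ldots, p'_{2n-1}$, i.e., $\frac{m}{2}$-subsets $p'_1, \ldots, p'_{m-1}$, whose pairs partition $S''$. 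This completes the induction.

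There is no real obstacle here: the work has all been done in the two preceding results, and the only thing to verify is that the induction terminates correctly. The key observation that makes it go through is that the base case of Lemma \ref{partition_2} requires exactly $4 \nmid n$, which matches the terminal case $k=1$ of the halving process, while Corollary \ref{partition_4} preserves the even-parity hypothesis needed to iterate. Since every even $m$ has the form $m = 2^k q$ with $q$ odd and $k \geq 1$, exactly one of these two cases applies at each stage, and the induction covers all even $m$.
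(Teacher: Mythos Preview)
Your proof is correct and follows essentially the same approach as the paper: write $m = 2^k q$ with $q$ odd, use Lemma~\ref{partition_2} for the base case $k=1$, and use Corollary~\ref{partition_4} for the inductive step to pass from $m/2$ to $m$. The only cosmetic difference is that the paper fixes the odd part $q$ and inducts on the exponent $s$ in $m_s = 2^s q$, whereas you induct on the $2$-adic valuation directly; the arguments are otherwise identical.
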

\begin{proof}
This is a proof by mathematical induction.\\
\\
Let 
$$2^s \cdot q =m,$$
for $q$ odd and $s \geq 1$.
\\
\underline{Base Case}.  For $s=1$, let  
\begin{itemize}
\item 
	$m_1 = 2q$,
\item 
	$S_1 = \{ 0, 1, ..., m_1 - 1\}$, 
\item 
	$T_1 = \{ (a,b) \mid a < b ; \; a, b \in S_1 \}$.
\end{itemize}
By Lemma \ref{partition_2}, we know that $T_1$ can be partitioned into $\frac{m_1}{2}$-subsets:  
$$p_{1,1}, p_{1,2}, ... , p_{1, m_1-1},$$ such that the pairs of each $p_{1,i}$ partition $S_1$ for each $i \in \{1, 2, ..., m_1-1 \}$.\\
\\
\underline{Inductive Case}.  For $s \geq 1$, assume \begin{itemize}
\item 
	$m_s = 2^s \cdot q$,
\item 
	$S_s = \{ 0, 1, ..., m_s - 1\}$, 
\item 
	$T_s = \{ (a,b) \mid a < b ; \; a, b \in S_s \}$, and
\item 
	 $T_s$ can be partitioned into $\frac{m_s}{2}$-subsets:  $p_{s,1}, p_{s,2},..., p_{s, m_s-1}$, such that the pairs of each $p_{s,i}$ partition $S_s$ for each $i \in \{1, 2, ..., m_s-1 \}$.
\end{itemize}
Then, based on Corollary \ref{partition_4}, for 
\begin{itemize}
\item 
	$m_{s+1} = 2^{s+1} \cdot q$,
\item 
	$S_{s+1} = \{ 0, 1, ..., m_{s+1} - 1\}$, 
\item 
	$T_{s+1} = \{ (a,b) \mid a < b ; \; a, b \in S_{s+1} \}$,
\end{itemize}
$T_{s+1}$ can be partitioned into $\frac{m_{s+1}}{2}$-subsets:  $p_{{s+1},1}, p_{{s+1},2},..., p_{{s+1}, m_{s+1}-1}$, such that  the pairs of each $p_{s+1,i}$ partition $S_{s+1}$ for each $i \in \{1, 2, ..., m_{s+1}-1 \}$.

Therefore, by mathematical induction, $T''$ can be partitioned into $\frac{m}{2}$-subsets:  $p'_1, p'_2,..., p'_{m-1}$, such that the pairs of each $p'_i$ partition $S''$ for each $i \in \{1, 2, ..., m-1 \}$.
\end{proof}

The following proves that there are an infinite number of Kirkman Quadruple Systems that reach the upper bound for the min-sum.  First, we identify some of the elements that will be used in the proof.

Let $(\mathcal{S}, \mathcal{B})$ be $KQS(n)$ where 
\begin{itemize}
	\item
		$\mathcal{S} = \{0, 1, ..., n-1)\}, $
	\item 
		$\mathcal{B} = \{ B_i \mid i = 1, 2, ..., \frac{n(n-1)(n-2)}{24} \}$ ,  
	\item 
		$\pi_t = \{ (a_{t,i}, b_{t,i}, c_{t,i}, d_{t,i}) \mid 1 \leq i \leq \frac{n}{4}  \}$, for $t = 1, 2,... \frac{(n-1)(n-2)}{6} ,$
	\item 
		with $min_\Sigma(\mathcal{B}) = n+2$.
\end{itemize}
Let $\mathcal{S'} = \{0, 1, ..., 2n-1 \}$.  We will show that the following form the parallel classes ($P_1 \cup P_2$) for  $(\mathcal{S'}, \mathcal{B'})$, a $KQS(2n)$ with $min_\Sigma = 2n+2$.
Let $P_1$ be the union of the following:
\begin{align*}
	\rho_{t,1} &= \{ (a_{t,i}+n, b_{t,i} \qquad , c_{t,i} \qquad , d_{t,i} \qquad  ) , \\
	& \qquad (a_{t,i} \qquad , b_{t,i}+n, c_{t,i}+n, d_{t,i}+n )  \mid 1 \leq i \leq \tfrac{n}{4}  \}, \\
	\rho_{t,2} &= \{ (a_{t,i} \qquad , b_{t,i}+n, c_{t,i} \qquad , d_{t,i} \qquad ) , \\
	& \qquad (a_{t,i}+n, b_{t,i} \qquad , c_{t,i}+n, d_{t,i}+n )  \mid 1 \leq i \leq \tfrac{n}{4}  \}, \\
	\rho_{t,3} &= \{ (a_{t,i} \qquad , b_{t,i} \qquad , c_{t,i}+n, d_{t,i} \qquad ) , \\
	&\qquad (a_{t,i}+n, b_{t,i}+n, c_{t,i} \qquad , d_{t,i}+n )  \mid 1 \leq i \leq \tfrac{n}{4}  \}, \\
	\rho_{t,4} &= \{ (a_{t,i} \qquad , b_{t,i} \qquad , c_{t,i} \qquad , d_{t,i}+n) , \\
	&\qquad (a_{t,i}+n, b_{t,i}+n, c_{t,i}+n, d_{t,i} \qquad  ) \mid 1 \leq i \leq \tfrac{n}{4}  \}, 
\end{align*}  
for $t = 1, 2,..., \frac{(n-1)(n-2)}{6}$.

Let $T = \{ (s,t) \mid s<t; \; s, t \in \mathcal{S} \}$.
Using Corollary \ref{partition_2_4}, partition $T$ into $\frac{n}{2}$-subsets: $p_1, p_2, ..., p_{n-1}$ such that the pairs of each $p_i$ partition $S$ for each $i \in \{1, 2, ..., n-1\}$.

Let $P_2$ be the union of 	
	$$\rho_i = \{ (s, t, s+n, t+n) \mid (s,t) \in p_i \}  \text{, for } i \in \{1, 2, ..., n-1\}.$$

\begin{thm}\label{thm_kqs}
For any $KQS(n)$ with $min_\Sigma = n+2$, a $KQS(2n)$ with $min_\Sigma = 2n+2$ can be constructed.
\end{thm}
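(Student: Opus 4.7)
The plan is to verify in turn that $(\mathcal{S}',\mathcal{B}')$ is resolvable by the listed parallel classes $P_1\cup P_2$, that it satisfies the Steiner property $S(3,4,2n)$, and that every block has sum at least $2n+2$; the general upper bound $\frac{2n(4-3+1)+4(3-2)}{2}=2n+2$ then forces equality. Resolvability of each $\rho_{t,k}$ follows from the fact that $\pi_t$ partitions $\mathcal{S}$: its $n/2$ constituent blocks split into $n/4$ with a single $+n$ at position $k$ and $n/4$ with a single unshifted coordinate at position $k$, so collecting coordinates recovers $\mathcal{S}\cup(\mathcal{S}+n)=\mathcal{S}'$ with no repeats. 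Resolvability of each $\rho_i$ is immediate from Corollary \ref{partition_2_4}, since $p_i$ partitions $\mathcal{S}$, so that $(s,t,s+n,t+n)$ over $(s,t)\in p_i$ partitions $\mathcal{S}'$.

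The main work is the Steiner property. I would stratify each triple $\{u,v,w\}\subset\mathcal{S}'$ by its \emph{signature} $(|\{u,v,w\}\cap\mathcal{S}|,|\{u,v,w\}\cap(\mathcal{S}+n)|)$ and by whether the three base values $\bar u,\bar v,\bar w\in\mathcal{S}$ (with $\bar x:=x$ or $x-n$ according to which copy $x$ lies in) are distinct. If they are distinct, the unique base block $(a_{t,i},b_{t,i},c_{t,i},d_{t,i})\in\pi_t$ containing $\{\bar u,\bar v,\bar w\}$ determines $t$ and $i$, and a short case check over the four signatures identifies a unique $k\in\{1,2,3,4\}$ and a unique first/second sub-block of $\rho_{t,k}$ containing the triple: the first sub-block absorbs the triples with at most one shifted element, the second absorbs those with at least two, and $k$ is read off as the position of either the missing base (pure signatures) or the minority-side element (mixed signatures). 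If instead the bases repeat, the triple has the form $\{x,y,x+n\}$ or $\{x,x+n,y+n\}$, and it lies in the unique $P_2$-block $(x,y,x+n,y+n)$ with $(x,y)\in p_i$, uniqueness coming from Corollary \ref{partition_2_4}. Exclusivity between $P_1$ and $P_2$ is the key point: within-block distinctness of $a_{t,i},b_{t,i},c_{t,i},d_{t,i}$ prevents any $P_1$-block from containing both $x$ and $x+n$, while $P_2$-blocks contain only same-base pairs and so cannot hold a triple with three distinct bases. A final numerical check $4|\mathcal{B}'|=4\cdot\frac{n(n-1)(2n-1)}{6}=\binom{2n}{3}$ confirms the covering is tight.

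The min-sum step is a direct computation: the first sub-block of any $\rho_{t,k}$ has sum equal to the base-block sum plus $n\ge(n+2)+n=2n+2$; the second sub-block has sum at least $(n+2)+3n\ge 2n+2$; and a $P_2$-block $(s,t,s+n,t+n)$ has sum $2(s+t)+2n\ge 2n+2$ since $s<t$ forces $s+t\ge 1$, with equality attained at $(0,1,n,n+1)$. The main obstacle I anticipate is the exclusivity argument inside the Steiner step — ruling out double coverage between the $P_1$ and $P_2$ families, and verifying that the triple $(t,k,\text{sub-block choice})$ is uniquely determined in the distinct-base case — both of which rest on the within-block distinctness in the base $KQS(n)$ and on the unique-pair property of Corollary \ref{partition_2_4}. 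Everything else reduces to bookkeeping.
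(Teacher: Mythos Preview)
Your proposal is correct and follows essentially the same approach as the paper: the same $P_1\cup P_2$ construction, the same separation of triples into those with distinct base values (handled by $P_1$ via the Steiner property of the base $KQS(n)$) versus those containing a pair $\{x,x+n\}$ (handled by $P_2$ via Corollary~\ref{partition_2_4}), and the same min-sum bookkeeping. Your signature-based case analysis makes the ``every triple is covered'' direction more explicit than the paper, which instead verifies only that no triple is repeated and then infers full coverage from the block count; but the two verifications are equivalent and rest on the same observations.
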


\begin{proof}
Let $(\mathcal{S}, \mathcal{B})$ be the $KQS(n)$ described above with $min_\Sigma(\mathcal{B}) = n+2$.  The following will show that $P_1 \cup P_2$ form the parallel classes for $(\mathcal{S'}, \mathcal{B'})$, a $KQS(2n)$ with $min_\Sigma(\mathcal{B'}) = 2n+2$.  
We need to prove the following:
\begin{enumerate}
\item 
	Each triple is not included more than once in $\mathcal{B'}$.
	\begin{itemize}
	\item
		For $P_1$, since each triple of $\mathcal{S}$ is included only once in $\mathcal{B}$, then the blocks of $P_1$ do not contain any duplicate triples.
	\item
		For $P_2$, since each pair is included only once in $T$, there will be no duplicate triples in $P_2$.
	\item
		$P_2$ includes values $s$ and $s+n$, but $P_1$ does not include any such pairs.  Therefore, there are no triples in $P_1$ that are also in $P_2$.
	\end{itemize}
	Therefore, each triple is not included more than once in $\mathcal{B'}$.
\item	
	$| \mathcal{B'} | = \frac{2n(2n-1)(2n-2)}{24} = \frac{n(2n-1)(2n-2)}{12}.$ 
	\begin{align*}
		| \mathcal{B'} | &= \text{Number of blocks in } P_1 \cup P_2 \\
		&= 2 \times \frac{n}{4} \times 4 \times \frac{(n-1)(n-2)}{6} +  \frac{n}{2} \times (n-1)\\
		&= \frac{2n(n-1)(n-2)}{6} + \frac{n(n-1)}{2}  \\
		&= \frac{n(2n-2)(2n-4)}{12} + \frac{n(2n-2) \cdot 3}{12}  \\
		&= \frac{n(2n-2)(2n-1)}{12}.
	\end{align*}
\item
	Each parallel class includes all elements of $\mathcal{S'}$.
	\begin{itemize}
	\item
		Since parallel class $\rho_{t, i}$ of $P_1$ contains 
		\begin{align*}
		a_{t,i} &, \; a_{t,i}+n,\\
		b_{t,i} &, \; b_{t,i}+n,\\
		c_{t,i} &, \; c_{t,i}+n,\\
		d_{t,i} &, \; d_{t,i}+n,\\
		\end{align*}
		for some parallel class $\pi_t$ of $(\mathcal{S}, \mathcal{B})$, then $\rho_{t, i}$ must contain all the elements of $\mathcal{S'}$.
	\item
		For $P_2$, since 
		$$\mathcal{S} = \{ s, t \mid (s,t) \in p_i \},$$	
		then 
		$$\rho_i = \{ (s, t, s+n, t+n) \mid (s,t) \in p_i \}$$
		must contain all the elements of $\mathcal{S'}$.
	\end{itemize}
\item	
	$| P_1 \cup P_2 | = \frac{(2n-1)(2n-2)}{6}.$ 
	\begin{align*}
	| P_1 \cup P_2 | &=  4 \times \frac{(n-1)(n-2)}{6} +  (n-1)\\
	&=  \frac{(2n-2)(2n-4)}{6} +  \frac{3(2n-2)}{6}\\
	&=  \frac{(2n-2)(2n-1)}{6}. 
	\end{align*}
\item	
	$min_\Sigma(\mathcal{B'}) = 2n+2.$
	\begin{itemize}
	\item
		Since 
		$$min_\Sigma(\mathcal{B}) = n+2,$$
		and each block of $P_1$ is a block of $\mathcal{B}$ increased by at least $n$, then 
		$$min_\Sigma(P_1) \geq 2n+2.$$
	\item
		For $P_2$, since $s$ or $t$ is at least 1, 
		$$min_\Sigma(P_2) = s + t +s+n + t+n \geq 2n+2.$$
	\end{itemize}
\end{enumerate}
Therefore, $(\mathcal{S'}, \mathcal{B'})$ is $KQS(2n)$ with $min_\Sigma(\mathcal{B'}) = 2n+2$.
\end{proof}

\begin{coro}\label{coro_kqs}
For every $k \geq 0$, there exists a $KQS(n= 4 \cdot 2^k)$ that reaches the upper bound of the min-sum.
\end{coro}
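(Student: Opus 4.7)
The plan is to prove the corollary by straightforward induction on $k$, using Theorem \ref{thm_kqs} as the inductive engine. Since the heavy lifting (constructing a $KQS(2n)$ from a $KQS(n)$ while preserving the upper bound on min-sum) has already been done, the corollary should follow with only a base-case verification and a one-line application of the theorem.

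For the base case $k = 0$, I would exhibit the unique $KQS(4)$ explicitly: $\mathcal{S}_0 = \{0,1,2,3\}$ and $\mathcal{B}_0 = \{(0,1,2,3)\}$, consisting of a single parallel class with a single block. This is trivially a Kirkman quadruple system, and the block sum is $0+1+2+3 = 6 = n+2$, which matches the upper bound $n+2$ computed in the footnote at the start of Section~3.1. So the base case holds.

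For the inductive step, I would assume that for some $k \geq 0$ there exists a $KQS(n_k)$ with $n_k = 4 \cdot 2^k$ achieving $min_\Sigma = n_k + 2$. Applying Theorem \ref{thm_kqs} to this system produces a $KQS(2n_k)$ with $min_\Sigma = 2n_k + 2$; since $2n_k = 4 \cdot 2^{k+1} = n_{k+1}$ and $2n_k + 2 = n_{k+1} + 2$, this is exactly a $KQS(n_{k+1})$ achieving the upper bound, closing the induction. I would then conclude that the claim holds for all $k \geq 0$.

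The only subtlety I anticipate is ensuring that the inductive hypothesis is phrased so that Theorem \ref{thm_kqs} applies verbatim, in particular that the ground set in the hypothesis is taken to be $\{0, 1, \ldots, n_k - 1\}$ with the labeling on which $min_\Sigma$ is measured. Since the base case is given with this labeling, and the construction in Theorem \ref{thm_kqs} yields a new system on $\{0, 1, \ldots, 2n_k - 1\}$ of the same form, the hypothesis carries cleanly from step to step. There is no real obstacle here beyond careful bookkeeping; the corollary is essentially a packaging statement for Theorem \ref{thm_kqs}.
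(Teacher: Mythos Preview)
Your proposal is correct and matches the paper's proof essentially line for line: both argue by induction on $k$, verify the base case $k=0$ with the single-block system on $\{0,1,2,3\}$ achieving $min_\Sigma = 6 = n+2$, and invoke Theorem~\ref{thm_kqs} for the inductive step. Your extra remark about the ground-set labeling is a welcome bit of care, but no substantive difference from the paper's argument.
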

\begin{proof}
This is a proof by mathematical induction.

\underline{Base case}.  For $k=0$, let $n=4 \cdot 2^k$.  Then
\begin{align*}
\mathcal{S}_0 &= \{0, 1, 2, 3 \}\\
\mathcal{B}_0 &= \{ (0, 1, 2, 3) \}\\
\pi_1 &= \{ (0, 1, 2, 3) \}\\
min_\Sigma(\mathcal{B}_0) &= 6 = n+2.
\end{align*}
Therefore, there exists a $KQS(n)$ with $min_\Sigma = n$ for $k=0$.

\underline{Inductive Case}.  Assume that there exists $k \geq 0$, such that for $n=4 \cdot 2^k$, there exists $(\mathcal{S}_k, \mathcal{B}_k)$, a $KQS(n)$ where $\mathcal{S}_k = \{0, 1, ..., n-1 \}$ with $min_\Sigma(\mathcal{B}_k) = n+2$.  Then, based on Theorem \ref{thm_kqs}, there exists $(\mathcal{S}_{k+1}, \mathcal{B}_{k+1})$, a $KQS(2n)$ where $\mathcal{S}_{k+1} = \{0, 1, ..., 2n-1 \}$ with $min_\Sigma(\mathcal{B}_{k+1}) = 2n+2$. 

Therefore, there exists $KQS(n = 4 \cdot 2^k)$ for all $k \geq 0$, that reaches the upper bound of min-sum.
\end{proof}

\section{Conclusion}
Theorem \ref{theorem_KTS(3n)} and its corollary prove that for every $k \geq 1$, there exists a $KTS(n = 3^k)$ with a minimum block sum of $n$, which is the upper bound for this value.

Theorem \ref{thm_kqs} and its corollary prove that for every $k \geq 0$, there exists a $KQS(n = 4 \cdot 2^k)$ with a minimum block sum of $n+2$, which is the upper bound for this value.

These conclusions can then be used to distribute servers by location, as described in Section 1, so that each location has all the data, while the chunks of data have been distributed across servers by popularity.

\begin{appendices}

\section{Example for Theorem \ref{theorem_KTS(3n)}}\label{App_example_3n}
The following provides an example of using a $KTS(9)$ with $\min_\Sigma =9$ to create a $KTS(27)$ with $\min_\Sigma = 27$.\\
\\
The parallel classes for KTS(9) are $\pi_i$ for i=1, 2, 3, 4.\\
$
\pi  _{1} = \{( 0, 1, 8), ( 3, 4, 2), ( 6, 7, 5)\} \\
\pi  _{2} = \{( 0, 2, 7), ( 3, 5, 1), ( 6, 8, 4)\} \\
\pi  _{3} = \{( 1, 2, 6), ( 4, 5, 0), ( 7, 8, 3)\} \\
\pi  _{4} = \{( 0, 3, 6), ( 1, 4, 7), ( 2, 5, 8)\} \\
$
\\
Using  Theorem \ref{theorem_KTS(3n)} results in the following parallel classes for order 27:\\
\begin{align*}
\pi ' _{0} &= \{( 0, 9, 18), ( 1, 10, 19), ( 2, 11, 20), ( 3, 12, 21), ( 4, 13, 22),\\ &\qquad ( 5, 14, 23), ( 6, 15, 24), ( 7, 16, 25), ( 8, 17, 26)\} \\
\pi ' _{1, 1} &= \{( 0, 1, 26), ( 9, 10, 8), ( 18, 19, 17), ( 3, 4, 20), ( 12, 13, 2),\\ &\qquad ( 21, 22, 11), ( 6, 7, 23), ( 15, 16, 5), ( 24, 25, 14)\} \\
\pi ' _{2, 1} &= \{( 0, 8, 19), ( 9, 17, 1), ( 18, 26, 10), ( 3, 2, 22), ( 12, 11, 4),\\ &\qquad ( 21, 20, 13), ( 6, 5, 25), ( 15, 14, 7), ( 24, 23, 16)\} \\
\pi ' _{3, 1} &= \{( 1, 8, 18), ( 10, 17, 0), ( 19, 26, 9), ( 4, 2, 21), ( 13, 11, 3),\\ &\qquad ( 22, 20, 12), ( 7, 5, 24), ( 16, 14, 6), ( 25, 23, 15)\} 
\end{align*}
\begin{align*}
\pi ' _{1, 2} &= \{( 0, 2, 25), ( 9, 11, 7), ( 18, 20, 16), ( 3, 5, 19), ( 12, 14, 1),\\ &\qquad ( 21, 23, 10), ( 6, 8, 22), ( 15, 17, 4), ( 24, 26, 13)\} \\
\pi ' _{2, 2} &= \{( 0, 7, 20), ( 9, 16, 2), ( 18, 25, 11), ( 3, 1, 23), ( 12, 10, 5),\\ &\qquad ( 21, 19, 14), ( 6, 4, 26), ( 15, 13, 8), ( 24, 22, 17)\} \\
\pi ' _{3, 2} &= \{( 2, 7, 18), ( 11, 16, 0), ( 20, 25, 9), ( 5, 1, 21), ( 14, 10, 3),\\ &\qquad ( 23, 19, 12), ( 8, 4, 24), ( 17, 13, 6), ( 26, 22, 15)\} 
\end{align*}
\begin{align*}
\pi ' _{1, 3} &= \{( 1, 2, 24), ( 10, 11, 6), ( 19, 20, 15), ( 4, 5, 18), ( 13, 14, 0),\\ &\qquad ( 22, 23, 9), ( 7, 8, 21), ( 16, 17, 3), ( 25, 26, 12)\} \\
\pi ' _{2, 3} &= \{( 1, 6, 20), ( 10, 15, 2), ( 19, 24, 11), ( 4, 0, 23), ( 13, 9, 5),\\ &\qquad ( 22, 18, 14), ( 7, 3, 26), ( 16, 12, 8), ( 25, 21, 17)\} \\
\pi ' _{3, 3} &= \{( 2, 6, 19), ( 11, 15, 1), ( 20, 24, 10), ( 5, 0, 22), ( 14, 9, 4),\\ &\qquad ( 23, 18, 13), ( 8, 3, 25), ( 17, 12, 7), ( 26, 21, 16)\} 
\end{align*}
\begin{align*}
\pi ' _{1, 4} &= \{( 0, 3, 24), ( 9, 12, 6), ( 18, 21, 15), ( 1, 4, 25), ( 10, 13, 7),\\ &\qquad ( 19, 22, 16), ( 2, 5, 26), ( 11, 14, 8), ( 20, 23, 17)\} \\
\pi ' _{2, 4} &= \{( 0, 6, 21), ( 9, 15, 3), ( 18, 24, 12), ( 1, 7, 22), ( 10, 16, 4),\\ &\qquad ( 19, 25, 13), ( 2, 8, 23), ( 11, 17, 5), ( 20, 26, 14)\} \\
\pi ' _{3, 4} &= \{( 3, 6, 18), ( 12, 15, 0), ( 21, 24, 9), ( 4, 7, 19), ( 13, 16, 1),\\ &\qquad ( 22, 25, 10), ( 5, 8, 20), ( 14, 17, 2), ( 23, 26, 11)\} 
\end{align*}
These blocks have $\min_\Sigma = 27$. 

\section{Example for Lemma \ref{partition_2}}\label{App_example_2}
The following provides an example of the partition of pairs under Lemma \ref{partition_2} for $n=6$.  Using the methodology of that lemma, we have the following parallel classes:
\begin{align*}
p_{E0} &= \{ (0, 3), (1, 5), (2, 4) \}\\
p_{E1} &= \{ (1, 4), (2, 0), (3, 5) \}\\
p_{E2} &= \{ (2, 5), (3, 1), (4, 0) \}\\
p_{O1} &= \{ (2, 3), (4, 5), (0, 1) \}\\
p_{O2} &= \{ (3, 4), (5, 0), (1, 2) \}
\end{align*}

\section{Example for Corollary \ref{partition_4}}\label{App_example_4}
The following provides an example of the partition of pairs under Corollary \ref{partition_4} for $2n=12$.  Using the methodology of that corollary, we will start with the following parallel classes for $n=6$ corresponding to $G_1$, as derived in Appendix \ref{App_example_2}:
\begin{align*}
p_{1} &= \{ (0, 3), (1, 5), (2, 4) \}\\
p_{2} &= \{ (1, 4), (2, 0), (3, 5) \}\\
p_{3} &= \{ (2, 5), (3, 1), (4, 0) \}\\
p_{4} &= \{ (2, 3), (4, 5), (0, 1) \}\\
p_{5} &= \{ (3, 4), (5, 0), (1, 2) \}
\end{align*}
The parallel classes corresponding to $G_2$ are as follows:
\begin{align*}
p^*_{1} &= \{ (6, 9), (7, 11), (8, 10) \}\\
p^*_{2} &= \{ (7, 10), (8, 6), (9, 11) \}\\
p^*_{3} &= \{ (8, 11), (9, 7), (10, 6) \}\\
p^*_{4} &= \{ (8, 9), (10, 11), (6, 7) \}\\
p^*_{5} &= \{ (9, 10), (11, 6), (7, 8) \}
\end{align*}
Then the parallel classes for $P_1$ are as follows:
\begin{align*}
p'_{1} &= \{ (0, 3), (1, 5), (2, 4), (6, 9), (7, 11), (8, 10) \}\\
p'_{2} &= \{ (1, 4), (2, 0), (3, 5), (7, 10), (8, 6), (9, 11) \}\\
p'_{3} &= \{ (2, 5), (3, 1), (4, 0), (8, 11), (9, 7), (10, 6) \}\\
p'_{4} &= \{ (2, 3), (4, 5), (0, 1), (8, 9), (10, 11), (6, 7) \}\\
p'_{5} &= \{ (3, 4), (5, 0), (1, 2), (9, 10), (11, 6), (7, 8) \}
\end{align*}
The parallel classes for $P_2$ are as follows:
\begin{align*}
p'_{6} &= \{ (0, 6), (1, 7), (2, 8), (3, 9), (4, 10), (5, 11) \}\\
p'_{7} &= \{ (0, 7), (1, 8), (2, 9), (3, 10), (4, 11), (5, 6) \}\\
p'_{8} &= \{ (0, 8), (1, 9), (2, 10), (3, 11), (4, 6), (5, 7) \}\\
p'_{9} &= \{ (0, 9), (1, 10), (2, 11), (3, 6), (4, 7), (5, 8) \}\\
p'_{10} &= \{ (0, 10), (1, 11), (2, 6), (3, 7), (4, 8), (5, 9) \}\\
p'_{11} &= \{ (0, 11), (1, 6), (2, 7), (3, 8), (4, 9), (5, 10) \}
\end{align*}

\section{Example for Theorem \ref{thm_kqs}}\label{App_example_kqs}
The following provides an example of using a $KQS(8)$ with $\min_\Sigma =10$ to create a $KQS(16)$ with $\min_\Sigma = 18$.\\
\\
The parallel classes for $KQS(8)$ are the following $\pi_i$ for $i = 1,2,...7$:\\
$
\pi  _{1} = \{( 4, 1, 2, 3), ( 0, 5, 6, 7)\} \\
\pi  _{2} = \{( 0, 5, 2, 3), ( 4, 1, 6, 7)\} \\
\pi  _{3} = \{( 0, 1, 6, 3), ( 4, 5, 2, 7)\} \\
\pi  _{4} = \{( 0, 1, 2, 7), ( 4, 5, 6, 3)\} \\
\pi  _{5} = \{( 0, 1, 4, 5), ( 2, 3, 6, 7)\} \\
\pi  _{6} = \{( 0, 2, 4, 6), ( 1, 3, 5, 7)\} \\
\pi  _{7} = \{( 0, 3, 4, 7), ( 1, 2, 5, 6)\} \\
$
\\
The following partitions of $\{ 0, 1, ..., 7 \}$ are used:\\
$
p _{1} = \{( 0, 1), ( 2, 3), ( 4, 5), ( 6, 7)\} \\
p _{2} = \{( 0, 2), ( 1, 3), ( 4, 6), ( 5, 7)\} \\
p _{3} = \{( 0, 3), ( 1, 2), ( 4, 7), ( 5, 6)\} \\
p _{4} = \{( 0, 4), ( 1, 5), ( 2, 6), ( 3, 7)\} \\
p _{5} = \{( 0, 5), ( 1, 6), ( 2, 7), ( 3, 4)\} \\
p _{6} = \{( 0, 6), ( 1, 7), ( 2, 4), ( 3, 5)\} \\
p _{7} = \{( 0, 7), ( 1, 4), ( 2, 5), ( 3, 6)\} \\
$
\\
Using  Theorem \ref{thm_kqs} results in the following parallel classes for order 16:\\
$
\rho  _{1,1} = \{ 12, 1, 2, 3), ( 4, 9, 10, 11), ( 8, 5, 6, 7), ( 0, 13, 14, 15)\} \\
\rho  _{1,2} = \{ 4, 9, 2, 3), ( 12, 1, 10, 11), ( 0, 13, 6, 7), ( 8, 5, 14, 15)\} \\
\rho  _{1,3} = \{ 4, 1, 10, 3), ( 12, 9, 2, 11), ( 0, 5, 14, 7), ( 8, 13, 6, 15)\} \\
\rho  _{1,4} = \{ 4, 1, 2, 11), ( 12, 9, 10, 3), ( 0, 5, 6, 15), ( 8, 13, 14, 7)\} \\
\\
\rho  _{2,1} = \{ 8, 5, 2, 3), ( 0, 13, 10, 11), ( 12, 1, 6, 7), ( 4, 9, 14, 15)\} \\
\rho  _{2,2} = \{ 0, 13, 2, 3), ( 8, 5, 10, 11), ( 4, 9, 6, 7), ( 12, 1, 14, 15)\} \\
\rho  _{2,3} = \{ 0, 5, 10, 3), ( 8, 13, 2, 11), ( 4, 1, 14, 7), ( 12, 9, 6, 15)\} \\
\rho  _{2,4} = \{ 0, 5, 2, 11), ( 8, 13, 10, 3), ( 4, 1, 6, 15), ( 12, 9, 14, 7)\} \\
\\
\rho  _{3,1} = \{ 8, 1, 6, 3), ( 0, 9, 14, 11), ( 12, 5, 2, 7), ( 4, 13, 10, 15)\} \\
\rho  _{3,2} = \{ 0, 9, 6, 3), ( 8, 1, 14, 11), ( 4, 13, 2, 7), ( 12, 5, 10, 15)\} \\
\rho  _{3,3} = \{ 0, 1, 14, 3), ( 8, 9, 6, 11), ( 4, 5, 10, 7), ( 12, 13, 2, 15)\} \\
\rho  _{3,4} = \{ 0, 1, 6, 11), ( 8, 9, 14, 3), ( 4, 5, 2, 15), ( 12, 13, 10, 7)\} \\
\\
\rho  _{4,1} = \{ 8, 1, 2, 7), ( 0, 9, 10, 15), ( 12, 5, 6, 3), ( 4, 13, 14, 11)\} \\
\rho  _{4,2} = \{ 0, 9, 2, 7), ( 8, 1, 10, 15), ( 4, 13, 6, 3), ( 12, 5, 14, 11)\} \\
\rho  _{4,3} = \{ 0, 1, 10, 7), ( 8, 9, 2, 15), ( 4, 5, 14, 3), ( 12, 13, 6, 11)\} \\
\rho  _{4,4} = \{ 0, 1, 2, 15), ( 8, 9, 10, 7), ( 4, 5, 6, 11), ( 12, 13, 14, 3)\} \\
\\
\rho  _{5,1} = \{ 8, 1, 4, 5), ( 0, 9, 12, 13), ( 10, 3, 6, 7), ( 2, 11, 14, 15)\} \\
\rho  _{5,2} = \{ 0, 9, 4, 5), ( 8, 1, 12, 13), ( 2, 11, 6, 7), ( 10, 3, 14, 15)\} \\
\rho  _{5,3} = \{ 0, 1, 12, 5), ( 8, 9, 4, 13), ( 2, 3, 14, 7), ( 10, 11, 6, 15)\} \\
\rho  _{5,4} = \{ 0, 1, 4, 13), ( 8, 9, 12, 5), ( 2, 3, 6, 15), ( 10, 11, 14, 7)\} \\
\\
\rho  _{6,1} = \{ 8, 2, 4, 6), ( 0, 10, 12, 14), ( 9, 3, 5, 7), ( 1, 11, 13, 15)\} \\
\rho  _{6,2} = \{ 0, 10, 4, 6), ( 8, 2, 12, 14), ( 1, 11, 5, 7), ( 9, 3, 13, 15)\} \\
\rho  _{6,3} = \{ 0, 2, 12, 6), ( 8, 10, 4, 14), ( 1, 3, 13, 7), ( 9, 11, 5, 15)\} \\
\rho  _{6,4} = \{ 0, 2, 4, 14), ( 8, 10, 12, 6), ( 1, 3, 5, 15), ( 9, 11, 13, 7)\} \\
\\
\rho  _{7,1} = \{ 8, 3, 4, 7), ( 0, 11, 12, 15), ( 9, 2, 5, 6), ( 1, 10, 13, 14)\} \\
\rho  _{7,2} = \{ 0, 11, 4, 7), ( 8, 3, 12, 15), ( 1, 10, 5, 6), ( 9, 2, 13, 14)\} \\
\rho  _{7,3} = \{ 0, 3, 12, 7), ( 8, 11, 4, 15), ( 1, 2, 13, 6), ( 9, 10, 5, 14)\} \\
\rho  _{7,4} = \{ 0, 3, 4, 15), ( 8, 11, 12, 7), ( 1, 2, 5, 14), ( 9, 10, 13, 6)\} \\
\\
$
$
\rho  _{1} = \{ 0, 1, 8, 9), ( 2, 3, 10, 11), ( 4, 5, 12, 13), ( 6, 7, 14, 15)\} \\
\rho  _{2} = \{ 0, 2, 8, 10), ( 1, 3, 9, 11), ( 4, 6, 12, 14), ( 5, 7, 13, 15)\} \\
\rho  _{3} = \{ 0, 3, 8, 11), ( 1, 2, 9, 10), ( 4, 7, 12, 15), ( 5, 6, 13, 14)\} \\
\rho  _{4} = \{ 0, 4, 8, 12), ( 1, 5, 9, 13), ( 2, 6, 10, 14), ( 3, 7, 11, 15)\} \\
\rho  _{5} = \{ 0, 5, 8, 13), ( 1, 6, 9, 14), ( 2, 7, 10, 15), ( 3, 4, 11, 12)\} \\
\rho  _{6} = \{ 0, 6, 8, 14), ( 1, 7, 9, 15), ( 2, 4, 10, 12), ( 3, 5, 11, 13)\} \\
\rho  _{7} = \{ 0, 7, 8, 15), ( 1, 4, 9, 12), ( 2, 5, 10, 13), ( 3, 6, 11, 14)\} \\
$\\
These blocks have $\min_\Sigma = 18$.

\end{appendices}



\end{document}